\date{}
\renewcommand{\uppercasenonmath}[1]{}
\theoremstyle{plain}
\newtheorem{theorem}{Theorem}[section]
\newtheorem{lemma}[theorem]{Lemma}
\newtheorem*{open question}{Open Question}
\theoremstyle{definition}
\newtheorem*{acknowledgement}{Acknowledgement}
\theoremstyle{remark}
\newcommand{\Add}{\mathrm{Add}}
\def\Hom{{\rm Hom}}
\begin{document}
	\begin{center}
		{\large  \bf A note on pure-semisimple rings}
		
		\vspace{0.5cm}     Xiaolei Zhang, Wei Qi
		
		\bigskip
		School of Mathematics and Statistics, Shandong University of Technology,\\
		Zibo 255049, P. R. China\\

	\end{center}
	
	\bigskip
	\centerline { \bf  Abstract}
	\bigskip
	\leftskip10truemm \rightskip10truemm \noindent
	
	In this note, we give several characterizations of  left pure-semisimple  in terms of the (pre)envelope, (pre)cover, direct limits, direct  sums, inverse limits and direct  products properties of pure-projective  modules or pure-injective modules.
	\vbox to 0.3cm{}\\
	{\it Key Words:}  pure-semisimple ring; pure-injective module; pure-projective module; cover and envelope.\\
	{\it 2010 Mathematics Subject Classification:} 16D60.
	
	\leftskip0truemm \rightskip0truemm
	\bigskip
	
	\section{Introduction}
	Throughout this article, $R$ is always  a ring with identity, and all modules are left modules.

	Let $M$ be an $R$-module and $\mathscr{C}$ be a  class  of $R$-modules. Recall from \cite{GT12} that  an $R$-homomorphism $f: M\rightarrow C$ with  $C\in \mathscr{C}$ is said to be a  \emph{$\mathscr{C}$-preenvelope}  provided that  for any $C'\in \mathscr{C}$, the natural homomorphism  $\Hom_R(C,C')\rightarrow \Hom_R(M,C')$ is an epimorphism. If, moreover, every endomorphism $h$ such that $f=hf$  is an automorphism, then $f: M\rightarrow C$ is said to be a
	\emph{$\mathscr{C}$-envelope}. Dually, one can define a \emph{$\mathscr{C}$-precover} and a \emph{$\mathscr{C}$-cover}.
	
	It is well-known that every $R$-module has a projective precover and an injective envelope for all rings $R$.
	In 1960, Bass \cite{B60} showed that a ring $R$ satisfies that every $R$-module has a projective cover, if and only if the class of projective $R$-module is closed under direct limits, if and only if $R$ is a left perfect ring, i.e., DCC holds for right principal ideals. Subsequently, Chase \cite{C60} obtained that the class of projective $R$-modules is closed under direct products if and only if $R$ is a left perfect and right coherent ring; and the class of injective $R$-modules is closed under direct sums (or direct limits) if and only if $R$ is a left Noetherian ring. In 1993, Asensio  and  Mart\'{i}ez \cite{AM93} showed that every $R$-modules has a projective (pre)envelope if and only if $R$ is a left perfect and right coherent ring.
	
	It is well-known that pure-injective modules  are $R$-modules $E$ satisfying that  $E\otimes_R-$ is an injective object in the functor category $(R\mbox{-mod}, Ab)$, i.e., the category of all additive functors from the category $R$\mbox{-mod} of all finitely presented $R$-modules to $Ab$ of all Abelian groups (see \cite[Theorem 12.1.6]{P09}). It also can be defined by the injectivity by using pure exact sequences. The notion of pure-projective modules can be defined dually.
	It is also well-known that every $R$-module has a pure-projective precover and a pure-injective envelope. A natural question is that how about the other related properties, such as the (pre)envelope, (pre)cover, direct limits, direct  sums, inverse limits and direct  products properties, of pure-projective modules or pure-injective modules. It is surprising all these properties characterize left pure-semisimple rings, i.e., rings over which all modules are pure-projective (pure-injective) (see Theorem \ref{main} for details).

	\section{main results}
	
	Recall that an $R$-module $C$ is said to be a \emph{pure-projective module} if any pure exact sequence $0\rightarrow A\rightarrow B\rightarrow C\rightarrow0$ ending at $C$ splits, or equivalently, if given any pure epimorphism $g: M\rightarrow N$ and any $R$-homomorphism $f:C\rightarrow N$, there is an $R$-homomorphism $h:C\rightarrow M$ such that the following diagram is commutative 	$$\xymatrix@R=25pt@C=40pt{
		& C\ar@{.>}[ld]_{h}\ar[d]^{f}\\
		M\ar@{->>}[r]^{\star}_{g}	&N\\}.$$
	The notion of a \emph{pure-injective module} can be defined dually.
	
	It is well known that an $R$-module is pure-projective if and only if it is a direct summand of a direct sum of finitely presented $R$-modules. So any direct sum or direct summand of a pure-projective module is pure-projective. It is well-known that every $R$-module has a pure-projective precover. However we can not find a suitable reference, so we exhibit it here.
	\begin{lemma}
		Let $R$ be a ring. Then every $R$-module has a 	pure-projective precover.
	\end{lemma}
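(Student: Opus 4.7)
The plan is to exhibit a pure-projective precover explicitly, using the characterization recalled just before the lemma that pure-projective modules are exactly the direct summands of direct sums of finitely presented modules. First I would fix a set $S$ of representatives of the isomorphism classes of finitely presented $R$-modules; such a set exists because every finitely presented module is the cokernel of some map $R^m\to R^n$, and there is only a set of such data. Given an $R$-module $M$, I would then form
\[
P \;=\; \bigoplus_{F\in S}\;\bigoplus_{\varphi\in \Hom_R(F,M)} F_{\varphi},
\]
where $F_{\varphi}$ denotes a copy of $F$ indexed by $\varphi$, and let $f:P\to M$ be the unique homomorphism whose restriction to the summand $F_{\varphi}$ is $\varphi$ itself. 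Since $P$ is a direct sum of finitely presented modules, the characterization above shows that $P$ is pure-projective.

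Next I would verify the precover property. Let $Q$ be an arbitrary pure-projective $R$-module and let $g:Q\to M$ be a homomorphism. By the characterization, $Q$ is a direct summand of some $T=\bigoplus_{j\in J}G_{j}$ with each $G_{j}$ finitely presented; write $\iota:Q\to T$ and $\pi:T\to Q$ with $\pi\iota=\Id_{Q}$. For each $j\in J$, choose an isomorphism $\alpha_{j}:G_{j}\to F_{j}$ with $F_{j}\in S$, and set $\varphi_{j}=(g\pi)|_{G_{j}}\circ\alpha_{j}^{-1}\in\Hom_R(F_{j},M)$. Then $\varphi_{j}$ indexes a summand $F_{j,\varphi_{j}}$ of $P$, and embedding $G_{j}$ into $P$ through $\alpha_{j}$ followed by that summand inclusion yields a map $h_{j}:G_{j}\to P$ satisfying $fh_{j}=(g\pi)|_{G_{j}}$.

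Finally, I would assemble the $h_{j}$ into a single map $\tilde h:T\to P$ with $f\tilde h=g\pi$, and take $h=\tilde h\,\iota:Q\to P$. Then $fh=g\pi\iota=g$, proving that $f:P\to M$ is a pure-projective precover. There is no serious obstacle here; the only point needing care is the set-theoretic one, namely ensuring that the index set appearing in the definition of $P$ really is a set, which is handled by the existence of the representing set $S$ together with the fact that each $\Hom_R(F,M)$ is already a set.
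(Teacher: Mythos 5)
Your proof is correct, but it takes a genuinely different route from the paper's. The paper writes $M$ as a direct limit of a direct system of finitely presented modules, observes that the induced map $\pi:\bigoplus_{i}F_i\to M$ is a pure epimorphism, and then simply invokes the defining lifting property of pure-projective modules against pure epimorphisms: any $f:P\to M$ with $P$ pure-projective lifts through $\pi$, so $\pi$ is already a precover. You instead build the tautological map $\bigoplus_{F\in S}\bigoplus_{\varphi\in\Hom_R(F,M)}F_{\varphi}\to M$ over a representing set $S$ and verify the factorization property by hand, decomposing an arbitrary pure-projective $Q$ as a summand of a direct sum of finitely presented modules. Your argument never uses purity at all: it is the generic fact that for any set $S$ of modules, the class of direct summands of direct sums of copies of modules from $S$ is precovering. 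This makes it more elementary in the sense that it only needs the structural characterization of pure-projectives quoted before the lemma, whereas the paper's argument is shorter but relies on knowing that the canonical epimorphism from the direct sum onto the direct limit is pure. One small remark on your verification: the assignment $j\mapsto(F_j,\varphi_j)$ need not be injective, so distinct summands $G_j$ of $T$ may land in the same summand of $P$; this causes no harm, since $\tilde h$ is assembled via the universal property of $\bigoplus_{j}G_j$, not of $P$, but it is worth saying explicitly.
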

	\begin{proof} Let $M$ be an $R$-module. It follows by \cite[Lemma 2.5]{GT12} that there is a family of direct system $\{F_i\mid i\in\Gamma\}$ of finitely presented $R$-modules such that ${\lim\limits_{\longrightarrow}}_{i\in\Gamma} F_i=M.$  So there is a pure exact sequence:
		$$0\rightarrow K\rightarrow \bigoplus\limits_{i\in\Gamma} F_i\xrightarrow{\pi} M\rightarrow 0.$$
		For any $R$-homomorphism $f:P\rightarrow M$ with $P$  pure-projective, there is an $R$-homomorphism $g: P\rightarrow\bigoplus\limits_{i\in\Gamma} F_i$ such that $f=\pi g$. Since $\bigoplus\limits_{i\in\Gamma} F_i$ is pure-projective, so $\pi: \bigoplus\limits_{i\in\Gamma} F_i\rightarrow M$ is a pure-projective precover of $M$.
	\end{proof}

	Dually, an $R$-module is pure-injective if and only if it is a direct summand of a direct product of dual of finitely presented $R$-modules. So any direct product or direct summand of a pure-injective module is pure-injective. Every $R$-module has a pure-injective envelope (see \cite[Proposition 6]{W69} or \cite[Theorem 6]{H03}).

	Recall that a ring $R$ is said to be a left \emph{pure-semisimple ring} if every $R$-module is pure-injective, or equivalently, every $R$-module is pure-projective. Some characterizations of pure-semisimple rings are given in \cite[Theorem 4.5.1,Theorem 4.5.7,Theorem 4.5.8, et. al]{P09}.

	The next main result is used to characterize pure-semisimple rings in terms of the (pre)envelope, (pre)cover, direct limits, direct  sums, inverse limits and direct  products properties of pure-projective  modules or pure-injective modules.

	\begin{theorem}\label{main} Let $R$ be a ring. Then the following statements are equivalent.
		\begin{enumerate}
			\item $R$ is a left pure-semisimple ring.
			\item Every $R$-module has a pure-projective envelope.
			\item Every $R$-module has a pure-projective preenvelope.
			\item   The class of pure-projective modules is closed under inverse limits.
			\item   The class of pure-projective modules is closed under direct products.
			\item   The class of pure-projective modules is closed under countably direct products.
			\item Every $R$-module has a pure-projective cover.
			\item The class of pure-projective modules is closed under direct limits.
			\item Every $R$-module has a pure-injective cover.
			\item Every $R$-module has a pure-injective precover.
			\item The class of pure-injective modules is closed under direct sums.
			\item The class of pure-injective modules is closed under countably direct sums.
			\item The class of pure-injective modules is closed under direct limits.
			\item The class of pure-injective modules is closed under inverse limits.
		\end{enumerate}
	\end{theorem}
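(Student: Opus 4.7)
The forward direction $(1)\Rightarrow(j)$ for each $j\in\{2,\dots,14\}$ is immediate, since under $(1)$ both the pure-projective and pure-injective classes coincide with the class of all $R$-modules, which is closed under every operation in sight and in which every module is its own (pre)envelope and (pre)cover.

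For the converse, my plan is to organize the remaining conditions into clusters and funnel them back to $(1)$. On the pure-projective side, I intend to establish $(2)\Rightarrow(3)\Rightarrow(5)\Rightarrow(6)\Rightarrow(1)$, $(4)\Rightarrow(5)$, and $(7)\Rightarrow(8)\Rightarrow(1)$. The links $(2)\Rightarrow(3)$ and $(5)\Rightarrow(6)$ are trivial. For $(3)\Rightarrow(5)$: given pure-projective modules $\{P_i\}_{i\in I}$ and a pure-projective preenvelope $f\colon\prod_iP_i\to Q$, each projection $\pi_i\colon\prod_iP_i\to P_i$ factors through $f$ (since $P_i$ is pure-projective), and assembling these factorizations yields a left inverse of $f$; hence $\prod_iP_i$ is a direct summand of $Q$, so pure-projective. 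For $(4)\Rightarrow(5)$ I write $\prod_{i\in I}M_i=\varprojlim_{F\subseteq I,\,F\text{ finite}}\prod_{i\in F}M_i$ as a directed inverse limit of pure-projective finite subproducts. For $(8)\Rightarrow(1)$ I invoke that every $R$-module is a direct limit of finitely presented (hence pure-projective) modules, so $(8)$ forces every module to be pure-projective. The link $(7)\Rightarrow(8)$ I would handle by combining the surjectivity of the pure-projective cover (from the precover lemma) with the minimality property of covers, which forces the cover of a direct limit of pure-projectives to be an isomorphism.

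On the pure-injective side, I plan $(9)\Rightarrow(10)\Rightarrow(11)\Rightarrow(12)\Rightarrow(1)$ together with $(13)\Rightarrow(11)$ and $(14)\Rightarrow(1)$. The links $(9)\Rightarrow(10)$ and $(11)\Rightarrow(12)$ are trivial. For $(10)\Rightarrow(11)$, dual to $(3)\Rightarrow(5)$: a pure-injective precover $\pi\colon Q\to\bigoplus_iE_i$ of a direct sum of pure-injectives admits a right inverse assembled from the factored inclusions $E_i\hookrightarrow\bigoplus_iE_i$, so $\bigoplus_iE_i$ is a summand of $Q$, pure-injective. For $(13)\Rightarrow(11)$: $\bigoplus_iE_i$ is the filtered colimit of its finite sub-sums, each pure-injective as a finite product. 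For $(14)\Rightarrow(1)$: given $M$, set $E_0=PE(M)$ and $E_1=PE(E_0/M)$; then $M$ is the kernel of the composite $E_0\twoheadrightarrow E_0/M\hookrightarrow E_1$, exhibiting $M$ as an inverse limit of pure-injectives, which $(14)$ forces to be pure-injective. For $(12)\Rightarrow(1)$ I would invoke the classical Zimmermann/Gruson--Jensen theorem characterizing left pure-semisimplicity by $\Sigma$-pure-injectivity of every pure-injective module.

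\textbf{Principal difficulty.} The hardest step is $(6)\Rightarrow(1)$: deducing pure-semisimplicity from closure under merely countable direct products of pure-projectives. This is the covariant counterpart of the Zimmermann/Gruson--Jensen theorem used for $(12)\Rightarrow(1)$ and does not reduce to the other clusters by a purely formal argument. My intended approach is to exploit that under $(6)$ every countable direct product of finitely presented modules, in particular $R^{\mathbb{N}}$, is a direct summand of a direct sum of finitely presented modules, and to combine Chase-type decomposition arguments with the structure theory of pure-projective modules to upgrade this observation to full left pure-semisimplicity.
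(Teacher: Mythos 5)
Your overall architecture is sound, and several of your routes are valid and genuinely different from the paper's: your $(14)\Rightarrow(1)$ via $M=\ker\bigl(PE(M)\to PE(PE(M)/M)\bigr)$ realises $M$ as a pullback (an inverse limit over a codirected three-element poset) of pure-injectives and avoids the paper's appeal to Bergman's theorem that every module is an inverse limit of injectives; your $(12)\Rightarrow(1)$ via $\Sigma$-pure-injectivity of $PE(M)$ (whose pure submodule $M$ is then a direct summand) is cleaner than the paper's elementary-cogenerator-plus-definable-closure argument, provided you cite the Gruson--Jensen/Zimmermann fact that pure submodules of $\Sigma$-pure-injectives are summands. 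The arguments for $(3)\Rightarrow(5)$, $(10)\Rightarrow(11)$, $(4)\Rightarrow(5)$, $(13)\Rightarrow(11)$ and $(8)\Rightarrow(1)$ coincide with or correctly flesh out what the paper treats as routine.

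However, the proposal has genuine gaps at exactly the two load-bearing implications. First, $(6)\Rightarrow(1)$ is not proved: observing that $R^{\mathbb{N}}$ is pure-projective is far too weak (that condition is a Chase-type coherence statement, nothing near pure-semisimplicity), and "Chase-type decomposition arguments" is not an argument. The paper's proof is model-theoretic: if $R$ is not left pure-semisimple there is a strictly descending chain $\phi_1>\phi_2>\cdots$ of pp-conditions; taking a free realisation $(C_i,c_i)$ of each $\phi_i$ gives finitely presented (hence pure-projective) modules, and the element $c=(c_i)$ of the countable product $\prod_{i\ge 1}C_i$ has pp-type generated by $\{\phi_i\}$, which is not finitely generated; since every element of a pure-projective module has finitely generated pp-type, the product is not pure-projective. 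You need this (or an equivalent functor-category argument); your sketch does not reach it. Second, your $(7)\Rightarrow(8)$ is asserted to follow from "the minimality property of covers," but the statement that a covering class is closed under direct limits is precisely Enochs' conjecture, which is open in general and is known for classes of the form $\Add(M)$ only by \v{S}aroch's recent theorem --- exactly the result the paper cites for its implication $(7)\Rightarrow(1)$. No formal minimality argument yields it, so as written this step is unjustified.
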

	\begin{proof} $(1)\Rightarrow (2)\Rightarrow (3)$ and  $(1)\Rightarrow (4)\Rightarrow (5)\Rightarrow (6)$: Trivial.
		
		$(3)\Rightarrow (5)$:  Let $\{C_{i}\mid i\in\Lambda\}$ be a family of pure-projective modules and $\prod\limits_{i\in\Lambda}C_{i}\rightarrow C$ be a pure-projective preenvelope. Since each $C_i$	is pure-projective, there is a decomposition $$\prod\limits_{i\in\Lambda}C_{i}\rightarrow C\rightarrow C_{i}$$ for each $i\in\Lambda$. Hence the composition $$\prod\limits_{i\in\Lambda}C_{i}\rightarrow C\rightarrow \prod\limits_{i\in\Lambda}C_{i}$$ is the identity of $\prod\limits_{i\in\Lambda}C_{i}$. So $\prod\limits_{i\in\Lambda}C_{i}$ is a direct summand of $C$, and so is also a pure-projective $R$-module.

		$(6)\Rightarrow (1)$:	On contrary, suppose that $R$ is not pure-semisimple. It follows by \cite[Theorem 4.5.1]{P09} that there is such a strictly descending chain of pp-conditions with one free variable for left modules:
		$$\phi_1 > \phi_2 > \cdots> \phi_i >\phi_{i+1} > \cdots\ \ \ \ \ \  (\star)$$
		Choose a free realisation $(C_i,c_i)$ for each $\phi_i$, that is, $R$-module $C_i$ and element $c_i \in C_i$ such that the pp-type of $c_i$ in $C_i$ is generated by $\phi_i$ for each $i$ (see \cite[Proposition 1.2.14]{P09}). So each $C_i$ is a finitely presented, hence a pure-projective  $R$-module.
		
		Set $$C=\prod\limits_{i\geq 1} C_i.$$ We claim that $C$ is not pure-projective  $R$-module. Indeed, consider the element $c=(c_i)_{i\geq 1}$ in $C$.
		It follows  by \cite[Lemma 1.2.2]{P09} that the pp-type of $c$ in $C$ is generated by the set $\{ \phi_i\mid i\geq 1\}$. We claim that the pp-type of $c$ in $C$ is not finitely generated. Indeed, on contrary suppose this product $C$ were pure-projective. It follows by \cite[Theorem 1.3.22, Corollary 1.3.25]{P09} that  the pp-type of $c$ in $C$ would be finitely generated by some pp-type, which contradicts that  the chain $(\star)$ is strictly descending. Hence $C$ is not pure-projective. Consequently, we have a product of pure-projective modules which is not pure-projective.
		
		$(1)\Rightarrow (8)$: Trivial.
		
		$(8)\Rightarrow (7)$: It follows by \cite[Theorem 5.31]{GT12} and that every $R$-module has a pure-projective precover.
		
		$(7)\Rightarrow (1)$ Set $\mathscr{FP}$ the set of all finitely presented $R$-modules and let $$M=\bigoplus\limits_{F\in \mathscr{FP}}F.$$ Then $\Add(M)$ the set of all direct summand of copies of $M$ is exactly that of all pure-projective modules.
		It follows by \cite	[Corollary 2.3]{S23} that $\Add(M)$ is closed
		under direct limits if and only if $\Add(M)$ is a covering class. Note that every $R$-module is a direct limits of finitely presented modules (thus pure-projective modules). Hence $(1)$ holds.
		
		$(1)\Rightarrow (13)\Rightarrow (11)\Rightarrow (12)$ and $(1)\Rightarrow (9)\Rightarrow (10)$: Trivial.

		$(10)\Rightarrow (11)$: Let $\{N_{i}\mid i\in\Lambda\}$ be a family of pure-injective modules and $N\rightarrow \bigoplus\limits_{i\in\Lambda}N_{i}$ be a pure-injective precover. Since each $N_i$	is pure-injective, there is a decomposition $$N_{i}\rightarrow N\rightarrow  \bigoplus\limits_{i\in\Lambda}N_{i}$$ for each $i\in\Lambda$. Hence the composition $$\bigoplus\limits_{i\in\Lambda}N_{i}\rightarrow N\rightarrow \bigoplus\limits_{i\in\Lambda}N_{i}$$ is the identity of $\bigoplus\limits_{i\in\Lambda}N_{i}$. So $\bigoplus\limits_{i\in\Lambda}N_{i}$ is a direct summand of $N$, and so is also a pure-injective $R$-module.

		$(12)\Rightarrow (1)$  Let $N$ be an elementary cogenerator for the category of all $R$-modules (see \cite[Corollary 5.3.52]{P09}). Then  $N$ is $\Sigma$-pure-injective, i.e., $N^{(\aleph_0)}$ is also pure-injective. So every $R$-module purely embeds in a direct product of copies of $N$, and thus in the definable closure of the  $\Sigma$-pure-injective module $N$. It follows by \cite[Proposition 4.4.12]{P09} that every $R$-module is ($\Sigma$-)pure-injective and hence $R$ is pure-semisimple.
		
		$(1)\Rightarrow (14)$ Trivial.
		
		$(14)\Rightarrow (1)$ It follows by \cite[Theorem 2]{B13} that every $R$-module is an inverse limit of injective modules (and thus pure-injective modules). So every $R$-module is pure-injective, that is, $R$ is a left pure-semisimple ring.
	\end{proof}
	
	\begin{acknowledgement}\quad\\
		The first author was supported by  the National Natural Science Foundation of China (No.12061001), and the second author was supported by  the National Natural Science Foundation of China (No.12201361).
	\end{acknowledgement}

\end{document}